\newcommand{\R}{\mathbb{R}}
\newcommand{\N}{\mathbb{N}}
\newcommand{\F}{\mathcal{F}}
\newcommand{\Sc}{\mathcal{S}}
\newcommand{\Exp}{\mathbb{E}}
\newcommand{\oo}{\mathcal{O}}
\newcommand{\K}{\mathcal{K}}
\newcommand{\inpr}[3][]{\left\langle#2 \,,\, #3\right\rangle_{#1}}
\numberwithin{equation}{section}
\newtheorem{theorem}{Theorem}[section]
\newtheorem{lemma}[theorem]{Lemma}
\newtheorem{proposition}[theorem]{Proposition}
\newtheorem{remark}[theorem]{Remark}
\title[Parametric family of SDEs]{Parametric family of SDEs driven by L\'evy noise} 
\author{Suprio Bhar}
\address{Suprio Bhar, Tata Institute of Fundamental Research, Centre For Applicable Mathematics,Post Bag No 6503, GKVK Post Office, Sharada Nagar, Chikkabommsandra, Bangalore 560065, India.}
\email{suprio@tifrbng.res.in, suprio.bhar@gmail.com}
\author{Barun Sarkar}
\address{Barun Sarkar, Indian Statistical Institute Bangalore Centre, 8th Mile Mysore Road, Bangalore 560059, India.}
\email{barunsarkar.math@gmail.com}
\begin{document}

\begin{abstract} 
In this article we study the existence and uniqueness of strong solutions of a class of parameterized family of SDEs driven by L\'evy noise. These SDEs occurs in connection with a class of stochastic PDEs, which take values in the space of tempered distributions $\Sc^\prime$. This
correspondence for diffusion processes was proved in [Rajeev, \emph{Translation invariant diffusion in the space of 
tempered distributions}, Indian J. Pure Appl. Math. \textbf{44} (2013), no.~2,
  231--258]. 
\end{abstract}
\keywords{$\mathcal{S}^\prime$ valued process, L\'{e}vy processes, Hermite-Sobolev space, Strong solution}
\subjclass[2010]{60G51, 60H10}

\maketitle

\section{Introduction}\label{S:1}
Given a complete filtered probability space $\big(\Omega,\F,\{\F_t\}_{t\geq0},P\big)$ satisfying the usual conditions, we consider the existence and uniqueness of strong solutions of a class of stochastic differential equations (SDEs) in $\R^d$, viz.
\begin{equation}
\begin{split}
dU_{t} &=\bar b(U_{t-};\xi)dt+ \bar\sigma(U_{t-};\xi)\cdot dB_t +\int_{(0 < |x| < 1)}  \bar F(U_{t-},x;\xi)\, \widetilde
N(dtdx)\\
&+\int_{(|x| \geq  1)} \bar G(U_{t-},x;\xi) \,
N(dtdx), \quad t\geq 0 \\
U_0&=\kappa,
\end{split}
\end{equation}
where
\begin{enumerate}[label=(\roman*)]
\item $\{B_t\}$ denotes an $\R^d$ valued standard Brownian motion and $N$ a Poisson random measure driven by a L\'evy measure $\nu$. $\widetilde N$ denotes the corresponding compensated random measure. We also assume that $B$ and $N$ are independent.
\item The parameter $\xi$ is an $\F_0$-measurable random variable and takes values in some specific Hilbert space, viz. the Hermite-Sobolev spaces (see Section \ref{S:2}). The random variable $\kappa$ is $\R^d$ valued and $\F_0$-measurable. Unless stated otherwise, $\xi$ and $\kappa$ will be taken to be independent of the noise $B$ and $N$.
\item The coefficients $\bar\sigma, \bar b, \bar F$ and $\bar G$ are defined in terms of $\sigma, b, F$ and $G$ which are the coefficients of an associated stochastic PDE, see for example \cite[p. 524]{MR3647067}, \cite[p. 170]{MR3687773}, \cite[p. 237]{MR3063763}. Note that the coefficients are allowed to be $\F_0$ measurable.
\end{enumerate}

Such SDEs occurs in connection with a class of stochastic PDEs whose solutions take values in the space of tempered distributions $\Sc^\prime$, see for example \cite{MR3063763, MR3647067, JOTP-erratum, MR3687773}. We can study the ergodicity/stationarity properties of these stochastic PDEs via the corresponding finite dimensional SDEs. A standard approach in proving the existence and uniqueness results for SDEs is to assume that the coefficients are Lipschitz (see \cite{MR1011252, MR2020294, MR2512800, MR2560625, MR1398879, MR2001996, MR1121940} and the references therein). The goal of this article is to describe hypotheses, which include appropriate parameterized versions of Lipschitz regularity of the coefficients and prove in detail the existence and uniqueness results.

We now describe the layout of the paper. In Section \ref{S:2}, we describe the space of Schwartz class functions $\Sc$ and its dual, the space of tempered distributions $\Sc^\prime$. We also recall definitions of the Hermite-Sobolev spaces $\Sc_p, p \in \R$.

In Section \ref{S:3}, we state the notation and hypotheses followed in the rest of the article. In Theorem \ref{nrm-bd-rndm-inl} the existence and uniqueness result is proved for the reduced equation with `global Lipschitz' coefficients and then in Theorem \ref{interlacing-global-sde} proved for the general case (i.e. involving the large jumps)  by an interlacing technique. In Theorem \ref{nrm-sqre-rndm-inl-fnl}, we prove the result for `local Lipschitz' coefficients.

In \cite[Proposition 3.7]{Levy-SPDE}, it is proved that the `local Lipschitz' regularity of the coefficients $\bar\sigma, \bar b, \bar F$ follow from explicit regularity assumptions on $\sigma, b, F$ provided other hypotheses are satisfied. Furthermore, the existence and uniqueness problems for the corresponding SPDEs are studied in \cite{Levy-SPDE}.

\section{Topology on Schwartz space}\label{S:2}
Let $\Sc$ be the space of rapidly decreasing smooth functions on $\R^d$ with dual $\Sc^\prime$, the space of tempered distributions (see \cite{MR771478}). Let $\mathbb{Z}^d_+:=\{n=(n_1,\cdots, n_d): \; n_i \text{ non-negative integers}\}$. If $n\in\mathbb{Z}^d_+$, we define $|n|:=n_1+\cdots+n_d$.

For $p \in \R$, consider the increasing norms $\|\cdot\|_p$, defined by the inner
products
\begin{equation}
\langle f,g\rangle_p:=\sum_{n\in\mathbb{Z}^d_+}(2|n|+d)^{2p}\langle f,h_n\rangle\langle g,h_n\rangle,\ \ \ f,g\in\Sc.
\end{equation}
In the above equation, $\{h_n: n\in\mathbb{Z}^d_+\}$ is an orthonormal basis for $\mathcal{L}^2(\R^d,dx)$ given by the Hermite functions and $\langle\cdot,\cdot\rangle$ is the usual
inner product in $\mathcal{L}^2(\R^d,dx)$. For $d=1$,
$h_n(t) :=(2^n n!\sqrt{\pi})^{-1/2}\exp\{-t^2/2\}H_n(t)$, where $H_n, t \in \R$ are the Hermite polynomials (see \cite{MR771478}). For $d > 1$, $h_n(x_1,\cdots,x_d) := h_{n_1}(x_1)\cdots h_{n_d}(x_d)$ for all $(x_1,\cdots,x_d) \in \R^d, n\in\mathbb{Z}^d_+$, where the Hermite functions on the right hand side are one-dimensional. We define the Hermite-Sobolev spaces $\Sc_p, p \in \R$ as the completion of $\Sc$ in
$\|\cdot\|_p$. Note that the dual space $\Sc_p^\prime$ is isometrically isomorphic with $\Sc_{-p}$ for $p\geq 0$. We also have $\Sc = \bigcap_{p}(\Sc_p,\|\cdot\|_p), \Sc^\prime=\bigcup_{p>0}(\Sc_{-p},\|\cdot\|_{-p})$ and $\Sc_0 = \mathcal{L}^2(\R^d)$.

For $x \in \R^d$, let $\tau_x$ denote the translation operators on $\Sc$ 
defined by
$(\tau_x\phi)(y):=\phi(y-x), \, \forall y \in \R^d$. These operators can be
extended to $\tau_x:\Sc'\to \Sc'$ by
\[\inpr{\tau_x\phi}{\psi}:=\inpr{\phi}{\tau_{-x}\psi},\, \forall \psi \in
\Sc.\]
\begin{proposition}\label{tau-x-estmte}
The translation operators $\tau_x, x \in \R^d$ have the following properties:
\begin{enumerate}[label=(\alph*)]
\item (\cite[Theorem 2.1]{MR1999259}) For $x \in \R^d$ and any $p \in \R$, $\tau_x: \Sc_p\to\Sc_p$
is a bounded linear map. In particular, there exists a real polynomial $P_k$ of
degree $k = 2(\lfloor|p|\rfloor +1)$ such that
\[\|\tau_x\phi\|_p\leq P_k(|x|)\|\phi\|_p, \, \forall \phi \in \Sc_p,\]
where $|x|$ denotes the Euclidean norm of $x$.
\item (\cite[Proposition 3.1]{MR2373102}) Fix $\phi \in \Sc_p$ for some $p \in \R$. The map $x \in\R^d \mapsto \tau_x\phi \in \Sc_p$ is continuous.
\end{enumerate}
\end{proposition}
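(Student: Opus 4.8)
The plan is to derive part (a) from the fact that $\|\cdot\|_p$ is the graph norm of a power of the Hermite operator, and then to obtain part (b) from (a) by a soft density argument; essentially all the analytic content sits in (a). Write $\mathcal{H} := -\Delta + |y|^2$ on $\Sc$, so that $\mathcal{H}h_n = (2|n|+d)h_n$ and hence $\|\phi\|_p = \|\mathcal{H}^p\phi\|_0$ for every $p$, where $\|\cdot\|_0$ is the $\mathcal{L}^2(\R^d)$-norm. Conjugating $\mathcal{H}$ by $\tau_x$ and computing pointwise on $\Sc$ gives the commutation relation $\mathcal{H}\tau_x = \tau_x(\mathcal{H} + 2\,x\cdot M + |x|^2 I)$, where $M=(M_1,\dots,M_d)$ and $M_j$ is multiplication by the $j$-th coordinate. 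Iterating this identity, for a non-negative integer $p$ one gets $\mathcal{H}^p\tau_x = \tau_x(\mathcal{H} + 2\,x\cdot M + |x|^2 I)^p$, and since $\tau_x$ is unitary on $\mathcal{L}^2=\Sc_0$ this yields $\|\tau_x\phi\|_p = \|(\mathcal{H} + 2\,x\cdot M + |x|^2 I)^p\phi\|_0$. The estimate then reduces to the mapping properties of the three operators on the scale $\{\Sc_s\}$: the relation $\mathcal{H}h_n=(2|n|+d)h_n$ shows $\mathcal{H}:\Sc_s\to\Sc_{s-1}$ is an isometry, while the Hermite recurrence $u_j h_n = \sqrt{(n_j+1)/2}\,h_{n+e_j} + \sqrt{n_j/2}\,h_{n-e_j}$ shows $M_j:\Sc_s\to\Sc_{s-1/2}$ is bounded with a constant independent of $x$; consequently $x\cdot M:\Sc_s\to\Sc_{s-1/2}$ has norm $\lesssim|x|$ and $|x|^2 I$ has norm $|x|^2$.

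Expanding $(\mathcal{H} + 2\,x\cdot M + |x|^2 I)^p$ into $3^p$ words and reading each word right to left as a composition down the scale starting from $\Sc_p$, a word with $a$ factors $\mathcal{H}$, $b$ factors $x\cdot M$ and $c$ factors $|x|^2 I$ (with $a+b+c=p$) maps $\Sc_p$ into $\Sc_{b/2+c}$ with operator norm $\lesssim|x|^{b+2c}$; since $b/2+c\geq 0$ and the norms increase in $s$, its $\Sc_0$-contribution is at most $C\,|x|^{b+2c}\|\phi\|_p$. Summing over all words bounds $\|\tau_x\phi\|_p$ by a polynomial in $|x|$ of degree $2p$ (the top degree being attained by the word $(|x|^2 I)^p$) times $\|\phi\|_p$, which proves (a) for non-negative integers. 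The case of negative $p$ would follow by duality: using the isometric isomorphism $\Sc_{-p}\cong\Sc_p^\prime$ together with $\inpr{\tau_x\phi}{\psi}=\inpr{\phi}{\tau_{-x}\psi}$, one gets $\|\tau_x\phi\|_{-p}=\sup_{\|\psi\|_p\leq1}|\inpr{\phi}{\tau_{-x}\psi}|\leq P_k(|x|)\,\|\phi\|_{-p}$ from the bound on $\tau_{-x}$ for $+p$. Finally, arbitrary real $p$ would be handled by complex interpolation between the consecutive integers $\lfloor p\rfloor$ and $\lfloor p\rfloor+1$, which realizes $\Sc_p$ as the geometric-mean weighted space and dominates the norm of $\tau_x$ by a polynomial of degree $2(\lfloor|p|\rfloor+1)$, giving the stated $k$.

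For part (b), the plan is first to reduce to continuity at the origin: from $\tau_x=\tau_{x_0}\tau_{x-x_0}$ and the finiteness of $\|\tau_{x_0}\|_{\Sc_p\to\Sc_p}$ from (a), one has $\|\tau_x\phi-\tau_{x_0}\phi\|_p\leq\|\tau_{x_0}\|_{\Sc_p\to\Sc_p}\,\|\tau_{x-x_0}\phi-\phi\|_p$, so it suffices to show $\|\tau_x\phi-\phi\|_p\to0$ as $x\to0$. I would prove this by density: for $\psi\in\Sc$ the map $x\mapsto\tau_x\psi$ is continuous into $\Sc$ with its Fr\'echet topology, hence into $\Sc_p$ since $\Sc\hookrightarrow\Sc_p$ is continuous; then, given $\phi\in\Sc_p$ and $\varepsilon>0$, choosing $\psi\in\Sc$ with $\|\phi-\psi\|_p<\varepsilon$ and using the uniform bound $\sup_{|x|\leq1}P_k(|x|)=P_k(1)$ from (a) yields $\limsup_{x\to0}\|\tau_x\phi-\phi\|_p\leq(P_k(1)+1)\varepsilon$, whence the claim. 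The main obstacle is entirely in (a): setting up the commutation relation correctly and, above all, tracking the degree through the scale $\{\Sc_s\}$ so as to land on the precise exponent $k=2(\lfloor|p|\rfloor+1)$ after the duality and interpolation steps; once that operator-theoretic estimate is in place, part (b) is routine.
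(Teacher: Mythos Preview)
The paper does not give its own proof of this proposition: both parts are simply quoted from the cited references \cite{MR1999259} and \cite{MR2373102}, so there is no ``paper's proof'' to compare against beyond those sources. Your argument is correct and is essentially the one carried out in \cite{MR1999259}: identify $\|\cdot\|_p$ with the graph norm of $\mathcal{H}^p$, conjugate $\mathcal{H}$ past $\tau_x$ to pick up the first-order multiplication perturbation, and then track each word of the expanded product down the Hermite--Sobolev scale using the elementary mapping bounds for $\mathcal{H}$ and $M_j$. The density argument you give for (b) is likewise the standard one.

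Two small points worth tightening. First, in the interpolation step for non-integer $p$ the interpolated operator norm $P_{2\lfloor p\rfloor}(|x|)^{1-\theta}P_{2(\lfloor p\rfloor+1)}(|x|)^\theta$ is not literally a polynomial; you should observe that it is dominated by $C(1+|x|)^{2(\lfloor p\rfloor+1)}$, which \emph{is} a polynomial of the stated degree $k=2(\lfloor|p|\rfloor+1)$. Second, your bookkeeping of the target space for a word with $a$ copies of $\mathcal{H}$, $b$ of $x\cdot M$ and $c$ of $|x|^2 I$ is correct (you land in $\Sc_{p-a-b/2}=\Sc_{b/2+c}\hookrightarrow\Sc_0$), but note that the operators $\mathcal{H}$, $M_j$, $I$ do not commute, so the $3^p$ words are genuinely distinct compositions and the constants coming from $M_j:\Sc_s\to\Sc_{s-1/2}$ depend on $s$; this only affects the constant in front, not the degree, so the conclusion stands.
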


\section{Finite dimensional SDEs}\label{S:3}
\subsection{setup and notations}\label{S:3-1}
We use the following notations throughout the paper.
\begin{itemize}
\item The set of positive integers will be denoted by $\N$.  Recall that for $x \in \R^n$, $|x|$ denotes its Euclidean norm. The transpose of any element $x \in \R^{n\times m}$ will be denoted by $x^t$.
\item For any $r > 0$, define $\oo(0,r):=\{x \in \R^d: |x|< r\}$. Then $\overline{\oo(0,r)} = \{x \in \R^d: |x| \leq r\}$ and $\oo(0,r)^c = \{x \in \R^d: |x|\geq r\}$.
\item Let $\big(\Omega,\F,\{\F_t\}_{t\geq0},P\big)$ be a filtered complete probability space satisfying the usual conditions viz. $\F_0$ contains all $A\in\F$, s.t. $P(A)=0$ and $\F_t=\bigcap_{s>t}\F_s, t \geq 0$.
\item Let $p>0$. Let $\sigma = (\sigma_{ij})_{d\times d}, b=(b_1, \cdots, b_d)^t$ be such that $\sigma_{ij}, b_i:\Omega\to\Sc_{p}$ are $\F_0$ measurable and 
\[\beta:= \sup\{\|\sigma_{ij}(\omega)\|_p, \|b_i(\omega)\|_p:\omega \in \Omega, 1 \leq i,j \leq d\} < \infty.\tag*{($\mathbf{\sigma b}$)} \label{sigma-b}\]
\item Define $\bar\sigma:\Omega\times\R^d\times\Sc_{-p}\to \R^{d\times d}$ and $\bar b:\Omega\times\R^d\times\Sc_{-p}\to \R^d$ by $\bar\sigma(\omega,z;y) := \inpr{\sigma(\omega)}{\tau_z y}$ and $\bar b(\omega,z;y) := 
\inpr{b(\omega)}{\tau_z y}$, where $(\inpr{\sigma(\omega)}{\tau_z y})_{ij}:= \inpr{\sigma_{ij}(\omega)}{\tau_z y}$ and $(\inpr{b(\omega)}{\tau_z y})_i := \inpr{b_i(\omega)}{\tau_z y}$.
\item Let $F:\Omega\times\mathcal{S}_{-p}\times \oo(0,1) \to 
\R^d$ and $G:\Omega\times\mathcal{S}_{-p}\times \oo(0,1)^c \to 
\R^d$ be $\F_0\otimes \mathcal{B}(\Sc_p)\otimes\mathcal{B}(\oo(0,1))/\mathcal{B}(\R^d)$ and $\F_0\otimes \mathcal{B}(\Sc_p)\otimes\mathcal{B}(\oo(0,1)^c)/\mathcal{B}(\R^d)$ measurable respectively. Here $\mathcal{B}(\K)$ denotes the Borel $\sigma$-field of set $\K$.
\item Define $\bar F: \Omega\times\R^d\times \oo(0,1)\times\Sc_{-p}\to \R^d$, $\bar G: \Omega\times\R^d \times \oo(0,1)^c\times\Sc_{-p}\to \R^d$ by $\bar F(\omega,z,x;y) := F(\omega,\tau_z y,x), \ \bar G(\omega,z,x;y) := G(\omega,\tau_z y, x)$.
\item Let $\{B_t\}$ denote a standard Brownian motion and let $N$ denote a Poisson random measure driven by a L\'evy measure $\nu$. $\widetilde N$ will denote the corresponding compensated random measure. We also assume that $B$ and $N$ are independent.
\end{itemize}

Consider the following SDE in $\R^d$, 
\begin{equation}\label{fd-sde-sln}
\begin{split}
dU_{t} &=\bar b(U_{t-};\xi)dt+ \bar\sigma(U_{t-};\xi)\cdot dB_t +\int_{(0 < |x| < 1)}  \bar F(U_{t-},x;\xi)\, \widetilde
N(dtdx)\\
&+\int_{(|x| \geq  1)} \bar G(U_{t-},x;\xi) \,
N(dtdx), \quad t\geq 0 \\
U_0&=\kappa,
\end{split}
\end{equation}
where $\xi$ is an $\Sc_{-p}$ valued $\F_0$-measurable random variable and $\kappa$ is an $\R^d$ valued $\F_0$-measurable random variable. Unless stated otherwise, $\xi$ and $\kappa$ will be taken to be independent of the noise $B$ and $N$. Note that the $i$-th component of $\int_0^t \bar\sigma(U_{s-};\xi)\cdot dB_s$ is $\sum_{j=1}^d\int_0^t \bar\sigma_{ij}(U_{s-};\xi)\, dB^j_s$. We list some hypotheses.

\begin{enumerate}[label=\textbf{(F\arabic*)},ref=\textbf{(F\arabic*)}]
\item\label{F1} For all $\omega\in\Omega$ and $x \in \oo(0,1)$ there exists a constant $C_x \geq 0$ s.t.
\begin{equation}\label{asm1}
\lvert F(\omega,y_1,x)-F(\omega,y_2,x)\rvert\leq C_x\|y_1-y_2\|_{-p-\frac{1}{2}}, \forall y_1,y_2\in\Sc_{-p}.
\end{equation}
We assume $C_x$ to depend only on $x$ and independent of $\omega$. Since $\|y\|_{-p-\frac{1}{2}} \leq \|y\|_{-p}, \forall y \in \Sc_{-p}$, we have 
\[\lvert F(\omega,y_1,x)-F(\omega,y_2,x)\rvert\leq C_x\|y_1-y_2\|_{-p}, \forall y_1,y_2\in\Sc_{-p}.\]
\item\label{F2} The constant $C_x$ mentioned above has the following properties, viz. 
\[\sup_{|x|<1}C_x<\infty,\quad \int_{(0<|x|<1)}C_x^2\, \nu(dx)<\infty.\]
\item\label{F3} $\sup_{\omega\in\Omega,|x|<1}|F(\omega,0,x)|<\infty$ and  $\sup_{\omega\in\Omega}\int_{(0<|x|<1)} |F(\omega,0,x)|^2\,\nu(dx)<\infty$. \end{enumerate}

\begin{enumerate}[label=\textbf{(G\arabic*)},ref=\textbf{(G\arabic*)}]
\item\label{G1} The mapping $y\rightarrow G(\omega,y,x)$ is continuous for all $x \in \oo(0,1)^c$ and $\omega \in \Omega$.
\end{enumerate}

\begin{remark}
Examples of coefficients $F$ and $G$ satisfying the above hypotheses can be constructed. See \cite[Example 3.1]{Levy-SPDE}.
\end{remark}

\begin{lemma}
[{\cite[Lemma 3.2]{Levy-SPDE}}]
\label{f-bd}
Assume \ref{F1}, \ref{F2} and \ref{F3}. Then, for any bounded set $\K$ in $\Sc_{-p}$ the following are true.
\begin{enumerate}[label=(\roman*)]
\item $\sup_{\omega\in\Omega,y\in \K,|x|<1}|F(\omega,y,x)|<\infty$.
\item $\sup_{\omega\in\Omega,y\in \K}\int_{(0<|x|<1)}|F(\omega,y,x)|^2\nu(dx)=:\alpha(\K)<\infty$.  
\item $\sup_{\omega\in\Omega,y\in \K}\int_0^t\int_{(0<|x|<1)}|F(\omega,y,x)|^4\nu(dx)ds<\infty$ for all $0\leq t<\infty$.
\end{enumerate}
\end{lemma}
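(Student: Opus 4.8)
The plan is to reduce all three bounds to the single Lipschitz estimate \ref{F1} taken against the base point $y=0$, and then to exploit the uniform controls in \ref{F2} and \ref{F3}. Fix a bounded set $\K\subseteq\Sc_{-p}$ and set $R:=\sup_{y\in\K}\|y\|_{-p}<\infty$. The workhorse is the triangle inequality together with \ref{F1}: for every $\omega\in\Omega$, $y\in\K$ and $0<|x|<1$,
\[|F(\omega,y,x)|\leq|F(\omega,y,x)-F(\omega,0,x)|+|F(\omega,0,x)|\leq C_x R+|F(\omega,0,x)|.\]
Everything else is a matter of taking norms and integrating this pointwise bound.

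For (i) I would simply take the supremum over $\omega$, $y\in\K$ and $|x|<1$ on the right-hand side, invoking $\sup_{|x|<1}C_x<\infty$ from \ref{F2} and $\sup_{\omega,|x|<1}|F(\omega,0,x)|<\infty$ from \ref{F3}. For (ii), squaring the pointwise bound via $(a+b)^2\leq 2a^2+2b^2$, integrating in $x$ over $(0<|x|<1)$, and then taking the supremum over $\omega$ and $y\in\K$ reduces the claim to the finiteness of $\int_{(0<|x|<1)}C_x^2\,\nu(dx)$ (from \ref{F2}) and of $\sup_\omega\int_{(0<|x|<1)}|F(\omega,0,x)|^2\,\nu(dx)$ (from \ref{F3}), both of which hold by hypothesis. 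This also furnishes the explicit bound on $\alpha(\K)$.

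The only mildly delicate point is (iii), where fourth powers appear although the hypotheses only control second moments. Here I would use the \emph{sup}-bounds in \ref{F2} and \ref{F3} to interpolate: writing $M:=\sup_{|x|<1}C_x$ and $K_0:=\sup_{\omega,|x|<1}|F(\omega,0,x)|$, one has $C_x^4\leq M^2 C_x^2$ and $|F(\omega,0,x)|^4\leq K_0^2\,|F(\omega,0,x)|^2$, so the fourth-power $\nu$-integrals are dominated by the already-finite second-power integrals from (ii). Applying $(a+b)^4\leq 8a^4+8b^4$ to the pointwise bound and integrating in $x$ then yields a finite quantity uniform in $(\omega,y)\in\Omega\times\K$; the remaining time integral over $[0,t]$ merely multiplies this constant by $t$, giving finiteness for every $0\leq t<\infty$. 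I expect no genuine obstacle beyond remembering to use the uniform sup-bounds to downgrade the fourth moments to the controlled second moments.
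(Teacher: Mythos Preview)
Your argument is correct. The paper does not actually supply its own proof of this lemma---it is quoted verbatim from \cite[Lemma 3.2]{Levy-SPDE}---so there is nothing to compare against, but the reduction you carry out (triangle inequality against the base point $y=0$, then invoking the sup- and $L^2$-controls from \ref{F2} and \ref{F3}, and for (iii) bounding $C_x^4\leq M^2C_x^2$ and $|F(\omega,0,x)|^4\leq K_0^2|F(\omega,0,x)|^2$) is exactly the natural proof and goes through without gaps.
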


Using the continuity result in Proposition \ref{tau-x-estmte} the next result follows.

\begin{lemma}[{\cite[Lemma 3.3]{Levy-SPDE}}]
Suppose \ref{G1} holds. Then the map $z \in \R^d \to \bar G(\omega, z,x; \xi(\omega)) = G(\omega,\tau_z\xi(\omega),x) \in \R^d$ is continuous for all $x \in \oo(0,1)^c$ and $\omega \in \Omega$.
\end{lemma}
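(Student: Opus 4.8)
The plan is to realize the map $z \mapsto \bar G(\omega, z, x; \xi(\omega))$ as a composition of two maps that are already known to be continuous, and then to invoke the fact that a composition of continuous maps is continuous. First I would fix $\omega \in \Omega$ and $x \in \oo(0,1)^c$ for the remainder of the argument and set $\phi := \xi(\omega)$; because $\xi$ is $\Sc_{-p}$-valued, $\phi$ is a fixed (deterministic) element of $\Sc_{-p}$ once $\omega$ is frozen.

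Next I would assemble the two continuity ingredients. The first is the continuity of translations in the Hermite--Sobolev norm: applying Proposition \ref{tau-x-estmte}(b) with the real index there taken to be $-p$ (that statement holds for every real parameter, so the negativity of $-p$ causes no difficulty), the map
\[
\R^d \ni z \longmapsto \tau_z \phi \in \Sc_{-p}
\]
is continuous. The second is hypothesis \ref{G1}, which for the fixed $\omega$ and $x$ asserts exactly that
\[
\Sc_{-p} \ni y \longmapsto G(\omega, y, x) \in \R^d
\]
is continuous.

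Composing these two maps, I would conclude that $z \mapsto G(\omega, \tau_z \phi, x)$ is continuous from $\R^d$ to $\R^d$. By the definition $\bar G(\omega, z, x; y) = G(\omega, \tau_z y, x)$, this composition is precisely $z \mapsto \bar G(\omega, z, x; \xi(\omega))$, which gives the claim. I do not expect any genuine obstacle here: the argument is immediate once the two continuity statements are aligned. The only points that warrant care are that Proposition \ref{tau-x-estmte}(b) must be used at the negative index $-p$, and that the lemma is stated pointwise in $(\omega, x)$, which is what allows me to treat $\phi = \xi(\omega)$ as a fixed element of $\Sc_{-p}$ when applying \ref{G1}.
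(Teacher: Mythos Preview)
Your proposal is correct and matches the paper's approach exactly: the paper does not spell out a proof but simply remarks that the lemma follows from the continuity result in Proposition~\ref{tau-x-estmte}, which is precisely the composition argument you give (continuity of $z\mapsto\tau_z\xi(\omega)$ in $\Sc_{-p}$ followed by continuity of $y\mapsto G(\omega,y,x)$ from \ref{G1}).
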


\subsection{Global Lipschitz coefficients}\label{S:3-2}
In this subsection, we establish the existence and uniqueness of strong solutions of \eqref{fd-sde-sln} under `global Lipschitz' coefficients $\bar\sigma, \bar b, \bar F$. To do this we first study the same problem for the corresponding reduced equation, viz.
\begin{equation}\label{reduced-fd-sde}
\begin{split}
dU_{t} &=\bar b(U_{t-};\xi)dt+ \bar\sigma(U_{t-};\xi)\cdot dB_t +\int_{(0 < |x| < 1)}  \bar F(U_{t-},x;\xi)\, \widetilde
N(dtdx), \quad t\geq 0 \\
U_0&=\kappa;
\end{split}
\end{equation}
with $\xi$ and $\kappa$ as in \eqref{fd-sde-sln}. Later, in Theorem \ref{interlacing-global-sde} we prove the result for equation \eqref{fd-sde-sln}.

\begin{theorem}
\label{nrm-bd-rndm-inl}
Let \ref{sigma-b}, \ref{F1}, \ref{F2} and \ref{F3} hold. Suppose the following conditions are satisfied.
\begin{enumerate}[label=(\roman*)]
\item $\kappa, \xi$ are $\F_0$ measurable, as stated in \eqref{fd-sde-sln}. 
\item (Global Lipschitz in $z$, locally in $y$) For every bounded set $\K$ in $\Sc_{-p}$, there exists a constant $C(\K)>0$ such that for all $z_1, z_2\in\R^d,\ y\in \K$ and $\omega\in\Omega$
\begin{equation}\label{Lipschitz-condition-rnm-inl}
\begin{split}
&|\bar{b}(\omega,z_1;y) - \bar{b}(\omega,z_2;y)|^2+ |\bar{\sigma}(\omega,z_1;y)-
\bar{\sigma}(\omega,z_2;y)|^2\\
&+\int_{(0 < |x| < 1)}|\bar{F}(\omega,z_1,x;y) - \bar{F}(\omega,z_2,x;y)|^2 \, \nu(dx) \leq C(\K)\,
|z_1
- z_2|^2.
\end{split}
\end{equation}

\end{enumerate}
Then \eqref{reduced-fd-sde} has an $(\F_t)$ adapted strong solution $\{X_t\}$ with rcll paths. Pathwise uniqueness of solutions also holds, i.e. if $\{X^1_t\}$ is another such solution, then $P(X_t=X_t^1,t\geq0)=1$.
\end{theorem}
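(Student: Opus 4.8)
The plan is to establish existence via a Picard iteration scheme and uniqueness via a Gronwall-type estimate, both built on the Burkholder--Davis--Gundy inequality and the isometry for the compensated Poisson integral. The central difficulty here — distinguishing this from the classical Lipschitz SDE theory — is that the Lipschitz constant $C(\K)$ depends on a bounded set $\K \subset \Sc_{-p}$, while the ``parameter'' entering the coefficients through $\tau_z y$ is driven by the random variable $\xi$, which need \emph{not} be bounded in $\Sc_{-p}$. Consequently, before any estimation can proceed, I would first localize in $\xi$.

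\medskip

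The first step is a \textbf{localization in the parameter} $\xi$. For each $m \in \N$, set $\Omega_m := \{\omega : \|\xi(\omega)\|_{-p} \leq m\}$; these are $\F_0$-measurable, increasing, and exhaust $\Omega$ up to a null set. On $\Omega_m$, Proposition \ref{tau-x-estmte}(a) gives $\|\tau_z\xi\|_{-p} \leq P_k(|z|)\,m$, so the relevant values of the parameter $\tau_z\xi$ lie in a bounded subset of $\Sc_{-p}$ provided $z$ ranges over a bounded set; more importantly, the hypothesis \eqref{Lipschitz-condition-rnm-inl} applies with a \emph{single} constant $C(\K_m)$ where $\K_m := \{ y \in \Sc_{-p} : \|y\|_{-p} \leq m\}$, since $\tau_z\xi(\omega) \in \K_m$ is \emph{not} automatic — rather, I would exploit that the Lipschitz bound in \eqref{Lipschitz-condition-rnm-inl} is uniform in $y \in \K$, so it suffices that the pair of values $z_1, z_2$ be compared at the \emph{same} $y = \tau_z\xi$; the essential point is that on $\Omega_m$ the coefficients are genuinely globally Lipschitz in the spatial variable $z$ with constant $C(\K_m)$. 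I would also need linear growth, which follows by combining \eqref{Lipschitz-condition-rnm-inl} (comparing $z$ to $0$) with Lemma \ref{f-bd}(ii), hypothesis \ref{F3}, and \ref{sigma-b}; here Lemma \ref{f-bd} supplies the crucial $L^2(\nu)$ bound $\alpha(\K_m)$ on $\bar F$ uniformly over the bounded set.

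\medskip

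On each $\Omega_m$ I would then run the \textbf{standard Picard iteration}: set $X^{(0)}_t \equiv \kappa$ and define $X^{(n+1)}$ by plugging $X^{(n)}$ into the right-hand side of \eqref{reduced-fd-sde}. Using Doob's maximal inequality for the $dt$ and $dB$ terms and the compensated-Poisson isometry (or BDG) for the $\widetilde N$ term, together with the $C(\K_m)$-Lipschitz bound, I would derive the contraction estimate
\begin{equation*}
\Exp\Big[\sup_{s\leq t}\big|X^{(n+1)}_s - X^{(n)}_s\big|^2 \indicator{\Omega_m}\Big] \leq C_m \int_0^t \Exp\Big[\sup_{r\leq s}\big|X^{(n)}_r - X^{(n-1)}_r\big|^2 \indicator{\Omega_m}\Big]\, ds,
\end{equation*}
whence the iterates form a Cauchy sequence in $L^2$ uniformly on $[0,T]$ by the usual factorial-decay argument, producing a limit $X^{m}$ that solves \eqref{reduced-fd-sde} on $\Omega_m$, with rcll paths inherited from the stochastic integrals. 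The \textbf{main obstacle} is ensuring the localized solutions are \emph{consistent}, i.e. that $X^{m'} = X^{m}$ on $\Omega_m$ for $m' > m$; this is precisely where pathwise uniqueness is needed, and I would prove uniqueness first by the same Gronwall estimate applied to the difference of two solutions (the iteration-index drops out, leaving $\Exp[\sup_{s\leq t}|X_s - X^1_s|^2\indicator{\Omega_m}] \leq C_m\int_0^t(\cdots)ds$, forcing the difference to vanish). Consistency then lets me \emph{patch} the solutions by defining $X_t := X^m_t$ on $\Omega_m$; since $\{\Omega_m\}$ exhausts $\Omega$, this yields a global $(\F_t)$-adapted rcll strong solution, and uniqueness transfers to $\Omega$ by the same patching.
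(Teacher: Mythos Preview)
Your overall architecture --- localize the parameter, run a Picard iteration controlled by Doob's maximal inequality and the compensated-Poisson isometry, prove uniqueness via Gronwall, then patch the localized solutions by consistency --- is exactly the route the paper takes. However, there is a genuine gap: you localize only in $\xi$, never in $\kappa$. The theorem assumes merely that $\kappa$ is $\F_0$-measurable, with no moment hypothesis, so $\Exp|\kappa|^2$ may be infinite. Your Picard scheme starts at $X^{(0)}\equiv\kappa$, and the base step of the factorial-decay argument requires
\[
\Exp\Bigl[\sup_{s\le t}\bigl|X^{(1)}_s-\kappa\bigr|^2\,\indicator{\Omega_m}\Bigr]<\infty,
\]
which via your linear-growth bound reduces to $\Exp\bigl[(1+|\kappa|^2)\,\indicator{\Omega_m}\bigr]<\infty$. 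You cannot conclude this, since $\Omega_m=\{\|\xi\|_{-p}\le m\}$ carries no information about $|\kappa|$. The same issue contaminates the Gronwall uniqueness step: one does not know a priori that $\Exp\bigl[\sup_{s\le t}|X_s-X^1_s|^2\,\indicator{\Omega_m}\bigr]$ is finite. The paper handles this by a \emph{second} truncation layered on top of yours: after treating bounded $\xi$ and $\kappa\in L^2$ (its Step~1, standard Picard), it relaxes to general $\xi$ with $\kappa\in L^2$ (Step~2, essentially your argument with $\xi^{(k)}:=\indicator{\{\|\xi\|_{-p}\le k\}}\xi$), and only then relaxes to general $\kappa$ (Step~3) by solving with initial data $\kappa^M:=\indicator{\{|\kappa|\le M\}}\kappa$, proving the consistency $\indicator{\{|\kappa|\le L\}}X^{\kappa^L}=\indicator{\{|\kappa|\le L\}}X^{\kappa^M}$ for $M\ge L$, and patching.

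A minor point of confusion worth flagging: the parameter $y$ entering the Lipschitz hypothesis \eqref{Lipschitz-condition-rnm-inl} is $\xi(\omega)$ itself, not $\tau_z\xi(\omega)$ --- the translation is already absorbed into the definition $\bar b(\omega,z;y)=\langle b(\omega),\tau_z y\rangle$. Thus on $\Omega_m$ one has $y=\xi(\omega)\in\K_m$ directly, and the Lipschitz constant $C(\K_m)$ applies immediately; there is no need to invoke Proposition~\ref{tau-x-estmte}(a) to control $\|\tau_z\xi\|_{-p}$. Your paragraph does eventually reach the right conclusion, but the detour through $\tau_z\xi$ is unnecessary and obscures the simple point.
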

\begin{proof}
We split the proof in the following three steps, depending on assumptions on the random variables $\kappa$ and $\xi$.
\begin{enumerate}[label=Step \arabic*:]
\item $\kappa, \xi$ are $\F_0$ measurable with $\Exp|\kappa|^2<\infty$ and $\sup_{\omega\in\Omega} \|\xi(\omega)\|_{-p} < \infty$.
\item $\kappa, \xi$ are $\F_0$ measurable with $\Exp|\kappa|^2<\infty$.
\item $\kappa, \xi$ are $\F_0$ measurable.
\end{enumerate}
Positive constants appearing in our computations may be written as $\gamma$ and may change its values from line to line.

\underline{Step 1:} The existence is established by Picard iterations and the uniqueness by Gronwall inequality arguments. This follows the standard approach as in \cite[Theorem 5.2.1]{MR2001996}, where SDEs driven by Brownian motion were considered. In the present case, we get the linear growth of the coefficients directly from the structure of the coefficients, see \eqref{picard3} below.

First we prove the uniqueness. Let $\{U_t^1\}$ and $\{U_t^2\}$ be two solutions of \eqref{reduced-fd-sde}. Define, for $\omega\in\Omega$
\begin{align*}
&\Theta(t,\omega):=\bar b(\omega,U^1_{t-}(\omega);\xi(\omega))-\bar b(\omega,U^2_{t-}(\omega);\xi(\omega)),\\
&\Xi(t,\omega):=\bar\sigma(\omega,U^1_{t-}(\omega);\xi(\omega))-\bar\sigma(\omega,U^2_{t-}(\omega);\xi(\omega)),\\
&\Psi(t,x,\omega):=\bar F(\omega,U^1_{t-}(\omega),x;\xi(\omega))-\bar F(\omega,U^2_{t-}(\omega),x;\xi(\omega)).
\end{align*}
Using \eqref{Lipschitz-condition-rnm-inl}, Doob's $\mathcal{L}^2$ maximal
inequality and It\^o isometry, we have for some positive constant $\gamma$,
\begin{align}\label{diff1}
\begin{split}
&\Exp\left(\sup_{0\leq s\leq t}|U_s^1-U_s^2|^2\right)\\
&\leq 3t\ \Exp\int_0^t|\Theta(s)|^2ds+12\Exp\int_0^t|\Xi(s)|^2ds+12\Exp\int_0^t\int_{(0 < |x| < 1)}|\Psi(s,x)|^2\nu(dx)ds\\
&\leq 3\gamma(t+8) \int_0^t\Exp\left(\sup_{0\leq u\leq s}|U_{u}^1-U_{u}^2|^2\right)ds.
\end{split}
\end{align}
We then obtain the uniqueness of the solutions by a Gronwall inequality argument.

To show the existence of a strong solution, we use Picard iteration. Set $U_t^{(0)} = \kappa$ and define
\begin{equation}\label{fd-sde-sln-rndm-inl-pkrd}
U_{t}^{(k+1)}:=\kappa+\int_0^t\bar b(U_{s-}^{(k)};\xi)ds+ \int_0^t\bar\sigma(U_{s-}^{(k)};\xi)\cdot dB_s
+\int_0^t\int_{(0 < |x| < 1)}  \bar F(U_{s-}^{(k)},x;\xi)\, \widetilde
N(dsdx),
\end{equation}
for all $k\geq0$. Fix $M \in \N$. For $k\geq1$, $t\in[0,M]$ we have
\begin{equation}\label{picard1}
\Exp\left(\sup_{0\leq s\leq t}|U_s^{(k+1)}-U_s^{(k)}|^2\right)\leq 3\gamma(M+8)\int_0^t\Exp\left(\sup_{0\leq u\leq s}|U_u^{(k)}-U_u^{(k-1)}|^2\right)ds.
\end{equation}
By \eqref{Lipschitz-condition-rnm-inl}, there exists a constant $C = C(Range(\xi))$ such that for $z\in\R^d$, $y\in Range(\xi)$
\begin{equation}\label{picard2}
\begin{split}
&|\bar{b}(\omega,z;y) - \bar{b}(\omega,0;y)|^2+ |\bar{\sigma}(\omega,z;y)-
\bar{\sigma}(\omega,0;y)|^2\\
&+\int_{(0 < |x| < 1)}|\bar{F}(\omega,z,x;y) - \bar{F}(\omega,0,x;y)|^2 \, \nu(dx) \leq C\,
|z|^2.
\end{split}
\end{equation}

Using \ref{sigma-b}), we have $|\bar b(\omega,0;y)|=|\langle b(\omega),y\rangle|\leq \beta \sqrt{d}\|y\|_{-p}$ and $|\bar\sigma(\omega,0;y)|=|\langle \sigma(\omega),y\rangle|\leq \beta d \|y\|_{-p}$. From \ref{F1}, we have $|\bar F(\omega,0,x;y)|=|F(\omega,y,x)|\leq C_x\|y\|_{-p}+|F(\omega,0,x)|$.

Therefore, using \eqref{picard2}, \ref{F2} and \ref{F3}, there exists a constant $D=D(Range(\xi))>0$ such that
\begin{equation}\label{picard3}
|\bar{b}(\omega,z;y)|^2+ |\bar{\sigma}(\omega,z;y) |^2
+\int_{(0 < |x| < 1)}|\bar{F}(\omega,z,x;y)  |^2 \, \nu(dx) \leq D\,
(1+|z|^2).
\end{equation}
As in \eqref{diff1}, using \eqref{fd-sde-sln-rndm-inl-pkrd}, Doob's $\mathcal{L}^2$ maximal
inequality and It\^o isometry and \eqref{picard3} we get  
\begin{equation}\label{picard4}
\Exp\left(\sup_{0\leq s\leq t}|U_s^{(1)}-U_s^{(0)}|^2\right) \leq(3t^2+24t) D \ \Exp (1+|\kappa|^2).
\end{equation}
Therefore by induction from \eqref{picard1}, there exists a positive constant $\tilde C$ s.t.
\begin{align}\label{picard5}
\Exp\left(\sup_{0\leq s\leq t}|U_s^{(k+1)}-U_s^{(k)}|^2\right)\leq\frac{(\tilde Ct)^{k+1}}{(k+1)!},\ \forall k\geq0,\ t\in[0,M].
\end{align} 
For positive integers $m,\ n$ with $m>n$, we have 
\begin{align}\label{converge-123}
\begin{split}
\lim_{m,n\rightarrow\infty}\Exp\sup_{0\leq t\leq M}|U_t^{(m)}-U_t^{(n)}|^2 &=\lim_{m,n\rightarrow\infty}\Exp\sup_{0\leq t\leq M}\left|\sum_{k=n}^{m-1}\big(U_t^{(k+1)}-U_t^{(k)}\big)\right|^2\\
&\leq\lim_{n\rightarrow\infty}\sum_{k=n}^{\infty}\Exp\sup_{0\leq t\leq M}\big|U_t^{(k+1)}-U_t^{(k)}\big|^2k^2\left(\sum_{k=n}^{\infty}k^{-2}\right).
\end{split}
\end{align}
The second series on the right hand side above converges. By \eqref{picard5}, the first series is bounded, since $\sum_{k=n}^{\infty}\frac{(\tilde CM)^{k+1}}{(k+1)!}k^2\rightarrow0$ as $n\rightarrow\infty$. Therefore   $\{U_t^{(m)}:m\in\N\}$ is Cauchy and hence converges to some $\{X_t\}_{t\in[0,M]}$ in $\mathcal L^2(\lambda\times P)$, where $\lambda$ denotes the Lebesgue measure on $[0,M]$.\\
Applying the Chebyshev-Markov inequality in \eqref{picard5}, we get
\begin{align*}
P\left(\sup_{0\leq s\leq t}|U_s^{(k+1)}-U_s^{(k)}|\geq\frac{1}{2^{k+1}}\right)\leq\frac{(4\tilde Ct)^{k+1}}{(k+1)!}.
\end{align*}
By Borel-Cantelli lemma
\begin{align*}
P\left(\limsup_{k\rightarrow\infty}\sup_{0\leq s\leq t}|U_s^{(k+1)}-U_s^{(k)}|\geq\frac{1}{2^{k+1}}\right)=0.
\end{align*}
Therefore, we conclude that $\{U^{(k)}\}$ is almost surely uniformly convergent on $[0,M]$ to $\{X_t\}$, which is adapted and rcll. Using \eqref{picard3} and the fact that a.s. $\{X_t\}$ has at most countably many jumps, we have
\[\Exp\int_0^M\int_{(0 < |x| < 1)}|\bar F(X_{s-},x;\xi)|^2\nu(dx)ds \leq\Exp\int_0^M D(1+|X_{s-}|^2)ds \leq D \left[ M + \|X\|^2_{\mathcal L^2(\lambda\times P)}\right] < \infty.\]
Therefore $\{\int_0^t\int_{(0 < |x| < 1)}  \bar F(X_{s-},x;\xi)\, \widetilde
N(dsdx)\}_{t\in[0,M]}$ exists. Similarly, we can show the existence of $\{\int_0^t\bar\sigma(X_{s-};\xi)\cdot dB_s\}_{t\in[0,M]}$ and $\{\int_0^t\bar b(X_{s-};\xi)ds\}_{t\in[0,M]}$.

By It\^o isometry and \eqref{Lipschitz-condition-rnm-inl}, we have the following convergence in $\mathcal L^2(P)$, viz. \[\int_0^t\int_{(0 < |x| < 1)}  \bar F(U_{s-}^{(k)},x;\xi)\, \widetilde
N(dsdx) \xrightarrow{k\to \infty} \int_0^t\int_{(0 < |x| < 1)}  \bar F(X_{s-},x;\xi)\, \widetilde
N(dsdx),\]  
for each $t\in[0,M]$. Similarly, we conclude that $\int_0^t\bar\sigma(U_{s-}^{(k)};\xi)\cdot dB_s\rightarrow\int_0^t\bar\sigma (X_{s-};\xi)\cdot dB_s$ and $\int_0^t\bar b(U_{s-}^{(k)};\xi) ds\rightarrow\int_0^t\bar b(X_{s-};\xi)ds$ in $\mathcal L^2(P)$ as $k\rightarrow\infty$, for each $t\in[0,M]$. Since $\{X_t\}$ is rcll, from \eqref{fd-sde-sln-rndm-inl-pkrd}, we have a.s. $\forall t\in[0,M]$,
\[
X_t=\kappa+\int_0^t\bar b(X_{s-};\xi)ds+ \int_0^t\bar\sigma(X_{s-};\xi)\cdot dB_s
+\int_0^t\int_{(0 < |x| < 1)}  \bar F(X_{s-},x;\xi)\, \widetilde
N(dsdx).
\]
Suppose $\{X_t^{(M)}\}$ and $\{X_t^{(M+1)}\}$ denote the solutions up to time $M$ and $M+1$ respectively. Then, by the uniqueness, $\{X_t^{(M+1)}\}_{t\in[0,M]}$ is indistinguishable from $\{X_t^{(M)}\}$ on $[0,M]$. Using this consistency, we obtain the solution of \eqref{reduced-fd-sde} on the time interval $[0,\infty)$. This concludes the proof for Step 1.

\underline{Step 2:} We follow the technique given in \cite[Theorem 3.3]{MR2560625}, where SDEs driven by Brownian motion were considered. For $k \in \N$, define $\chi_k:=\mathbbm1_{\{\|\xi\|_{-p}\leq k\}}$ and let $\xi^{(k)}:=\chi_k\ \xi$. Let $U^{(k)}$ be the solution of \eqref{reduced-fd-sde} with the initial condition $\xi^{(k)}$. Our aim is to show that $\chi_kU^{(k)}=\chi_kU^{(k+1)}$. Let $U^{(k)}_n$ and $U^{(k+1)}_n$ be the approximations of $U^{(k)}$ and $U^{(k+1)}$ obtained in Step 1 above. Now,
\[U^{(k)}_0(t)=\kappa,\ U^{(k+1)}_0(t)=\kappa\ \text{and}\  \chi_kU^{(k)}_0(t)=\chi_kU^{(k+1)}_0(t).\]
Observe that, for $\omega\in\Omega$
\[\chi_k(\omega)\bar b(\omega,U^{(k)}_0(s-)(\omega);\xi^{(k)}(\omega))=\chi_k(\omega)\bar b(\omega,U^{(k+1)}_0(s-)(\omega);\xi^{(k+1)}(\omega)).\] 
Similar equalities hold for coefficients $\bar \sigma$ and $\bar F$.
Using \eqref{fd-sde-sln-rndm-inl-pkrd} and these equalities, a.s. $t\geq 0$, $\chi_kU_{1}^{(k)}(t)=\chi_kU_{1}^{(k+1)}(t)$. By induction a.s. $t\geq 0, \chi_kU_{n}^{(k)}(t)=\chi_kU_{n}^{(k+1)}(t)$.

Letting $n$ go to infinity and using the generalized Lebesgue DCT (see   \cite[Theorem 3.4]{MR2560625}), we have, a.s. $\forall t\in[0,T]$, $\chi_kU^{(k)}(t)=\chi_kU^{(k+1)}(t)$. Note that $P\big(\bigcup_k\{\chi_k=1\}\big)=1$. Now define
\[X_t(\omega):=U^{(k)}(t)(\omega),\ \ \text{if $\|\xi(\omega)\|_{-p}\leq k$.}\]
Observe that, a.s. $\forall t\in[0,T], \chi_k U^{(k)}(t) = \chi_k X_t$. It is easy to check that $\{X_t\}$ satisfies \eqref{reduced-fd-sde}.

To prove the uniqueness, let $\{X_t\}$ and $\{Y_t\}$ be two solutions of \eqref{reduced-fd-sde}. Define
\[\widetilde F(\omega,z,x;y):=\mathbbm1_{\{\tilde y:\|\tilde y\|_{-p}\leq k\}}(y)\bar F(\omega,z,x;\mathbbm1_{\{\tilde y:\|\tilde y\|_{-p}\leq k\}}(y)y),\]
and $X^k_t:=\chi_k X_t$, for $\omega\in\Omega, k \in \N$. Similarly define $\{Y^k_t\}$ for $k \in \N$. Observe that
\begin{align*}
\widetilde F(\omega,z,x;\xi(\omega))&=\mathbbm1_{\{\tilde y:\|\tilde y\|_{-p}\leq k\}}(\xi(\omega))\bar F\big(\omega,z,x;\mathbbm1_{\{\tilde y:\|\tilde y\|_{-p}\leq k\}}(\xi(\omega))\xi(\omega)\big)\\
&=\mathbbm1_{\{\tilde\omega:\|\xi(\tilde\omega)\|_{-p}\leq k\}}(\omega)\bar F\big(\omega,z,x;\mathbbm1_{\{\tilde\omega:\|\xi(\tilde\omega)\|_{-p}\leq k\}}(\omega)\xi(\omega)\big),
\end{align*}
and
\begin{align*}
&\chi_k(\omega)\bar b(\omega,X_{s-}(\omega);\xi(\omega)) =\bar b(\omega,X^k_{s-}(\omega);\xi^k(\omega)),\\
&\chi_k(\omega)\bar\sigma(\omega,X_{s-}(\omega);\xi(\omega))=\bar\sigma(\omega,X^k_{s-}(\omega);\xi^k(\omega)),\\
&\chi_k(\omega)\bar F(\omega,X_{s-}(\omega),x;\xi(\omega)) =\chi_k(\omega)\bar  F(\omega,X^k_{s-}(\omega),x;\chi_k(\omega)\xi^k(\omega))=\widetilde  F(\omega,X^k_{s-}(\omega),x;\xi^k(\omega)).
\end{align*}
Therefore,
\begin{equation}\label{x^k-unq}
\begin{split}
X_t^k &= \chi_k X_t\\ 
&= \chi_k\ \kappa + \int_0^t \bar b(X^k_{s-};\xi^k)ds+ \int_0^t\bar\sigma(X^k_{s-};\xi^k)\cdot dB_s
+\int_0^t\int_{(0 < |x| < 1)}\widetilde F(X^k_{s-},x;\xi^k)\, \widetilde
N(dsdx).
\end{split}
\end{equation}
Now, in \eqref{x^k-unq} $\xi^k$ is norm bounded. Moreover, it is easy to check that $\bar b$, $\bar\sigma$ and $\widetilde F$ satisfy \eqref{Lipschitz-condition-rnm-inl}. By the uniqueness in Step 1, we conclude that $\{X^k_t\}$ is the unique solution of \eqref{reduced-fd-sde} with initial condition $\chi_k\ \kappa$ and in particular,
\[\chi_k(\omega)X_t=X^k_t=Y^k_t=\chi_k(\omega)Y_t.\]
Since $k$ is arbitrary, therefore, a.s. $\forall t\in[0,T]$, $X_t=Y_t$. This completes the proof for Step 2.

\underline{Step 3:} We follow the argument given in  \cite[Theorem 6.2.3]{MR2512800}. Define $\Omega_M:=\{\omega\in\Omega:\ |\kappa|\leq M\}$ for each $M\in\N$. Then $\Omega=\bigcup_{M\in\N}\Omega_M$ and $\Omega_L\subseteq\Omega_M$ whenever $L\leq M$. 

Let $\kappa^M(\omega):=\mathbbm1_{\{|\kappa|\leq M\}}(\omega)\kappa(\omega)$. Note that $\kappa^M\in\mathcal L^2$. By Step 2, there exists a unique solution, say $\{X_t^{\kappa^M}\}$, of the reduced equation \eqref{reduced-fd-sde} for the initial condition $\kappa^M$, i.e. a.s. $t \geq 0$ 
\[X_t^{\kappa^M} = \kappa^M + \int_0^t\bar b(X^{\kappa^M}_{s-};\xi)ds +  \int_0^t\bar\sigma(X^{\kappa^M}_{s-};\xi)\cdot dB_s + \int_0^t\int_{(0 < |x| < 1)} \bar F(X^{\kappa^M}_{s-},x;\xi)\, \widetilde
N(dsdx).\]
We first show a.s. $\mathbbm1_{\{|\kappa|\leq L\}}(\omega) X_t^{\kappa^L}(\omega) = \mathbbm1_{\{|\kappa|\leq L\}}(\omega) X_t^{\kappa^M}(\omega), t \geq 0$ for all $M\geq L$. Define
\[\widetilde{F}(\omega,z,x;y):=\mathbbm1_{\{|\kappa|\leq L\}}(\omega)\bar F(\omega,z,x;y).\]
Now, $\{\mathbbm1_{\{|\kappa|\leq L\}} X_t^{\kappa^L}\}$ and $\{\mathbbm1_{\{|\kappa|\leq L\}} X_t^{\kappa^M}\}$ both satisfy the reduced equation
\begin{equation}
\begin{split}
dX_{t} &=\bar b(X_{t-};\mathbbm1_{\{|\kappa|\leq L\}}\xi)dt+ \bar\sigma(X_{t-};\mathbbm1_{\{|\kappa|\leq L\}}\xi)\cdot dB_t +\int_{(0 < |x| < 1)}  \widetilde F(X_{t-},x;\mathbbm1_{\{|\kappa|\leq L\}}\xi)\, \widetilde
N(dtdx),\\
X_0&=\kappa^L.
\end{split}
\end{equation}
It is easy to check that $\bar b$, $\bar\sigma$, $\widetilde F$ satisfy \eqref{Lipschitz-condition-rnm-inl}. Then by the uniqueness in Step 2 for all $M\geq L$ a.s. \[\mathbbm1_{\{|\kappa|\leq L\}} X_t^{\kappa^L} = \mathbbm1_{\{|\kappa|\leq L\}} X_t^{\kappa^M},\ t \geq 0.\]
Since $\Omega_M$ increases to $\Omega$, for all $\epsilon>0$, there exists $M\in\N$, such that $P(\Omega_n)>1-\epsilon, \forall n > M$. Hence, 
\[P\left(\sup_{t\geq0}|X^{\kappa^m}_t-X^{\kappa^n}_t|>\delta\right)<\epsilon, \ \forall \delta > 0, \forall m,n>M.\]
Therefore the sequence of processes $\{X^{\kappa^n}\}_{n\in\N}$ is uniformly Cauchy in probability and so is uniformly convergent in probability to a process, say $\{X_t\}$. We extract a subsequence for which the convergence holds uniformly and almost surely. This convergence implies that $\{X_t\}$ has rcll paths and solves \eqref{reduced-fd-sde}.

To prove the uniqueness, we consider the solution $\{X_t\}$ constructed above and compare it with any arbitrary solution $\{X'_t\}_{t\geq0}$ of \eqref{reduced-fd-sde}. We claim that for all $M\geq L$, $X'_t(\omega)=X^{\kappa^M}_t(\omega)$ for all $t\geq0$ and almost all $\omega\in\Omega_L$. Suppose for some $M\geq L$, it doesn't hold. Define
\[X''^{\kappa^M}_t(\omega):=
\begin{cases}
X'_t(\omega) \ \ \text{for}\ \omega\in\Omega_L,\\
X^{\kappa^M}_t(\omega),\ \text{for}\ \omega\in\Omega_L^c.
\end{cases}\]
Then $X''^{\kappa^M}$ and $X^{\kappa^M}$ are two distinct solutions of  \eqref{reduced-fd-sde} with the same initial condition $\kappa^M$, which is a contradiction. This proves our claim. Next by applying a limiting argument we conclude that $P(X_t=X'_t, \forall t\geq0)=1$. This completes the proof of Step 3 as well as the theorem.
\end{proof}

We now consider the SDE \eqref{fd-sde-sln}. The next result follows by the interlacing technique (see \cite[Example 1.3.13, pp. 50-51]{MR2512800}).

\begin{theorem}\label{interlacing-global-sde}
Suppose all the assumptions of Theorem \ref{nrm-bd-rndm-inl} hold. In addition, assume that \ref{G1} holds. Then there exists a unique rcll adapted solution to \eqref{fd-sde-sln}.
\end{theorem}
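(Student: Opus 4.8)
The plan is to derive the solution of the full equation \eqref{fd-sde-sln} from that of the reduced equation \eqref{reduced-fd-sde} by the standard \emph{interlacing} procedure. The starting observation is that since $\nu$ is a L\'evy measure, $\lambda := \nu(\oo(0,1)^c) < \infty$, so that the large-jump part of the Poisson random measure, namely $N$ restricted to $[0,\infty) \times \oo(0,1)^c$, is compound-Poisson in nature: the epochs at which a jump with $|x| \geq 1$ occurs form a Poisson process of finite rate $\lambda$, independent of the drift, of $B$, and of the small-jump part of $N$ that drives \eqref{reduced-fd-sde}. Denote these successive jump times by $0 =: T_0 < T_1 < T_2 < \cdots$ (with $T_n \uparrow \infty$ a.s.) and the associated marks by $\Delta_n \in \oo(0,1)^c$. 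On any compact time interval only finitely many of the $T_n$ occur, almost surely.

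First I would construct the solution recursively on the successive intervals $[T_n, T_{n+1})$. On $[0, T_1)$ the large-jump integral in \eqref{fd-sde-sln} contributes nothing, so I set $\{U_t\}_{t \in [0,T_1)}$ equal to the unique solution of the reduced equation \eqref{reduced-fd-sde} with initial data $\kappa$ furnished by Theorem \ref{nrm-bd-rndm-inl}. At the time $T_1$ I insert the jump dictated by the large-jump term, i.e. $U_{T_1} := U_{T_1-} + \bar G(U_{T_1-}, \Delta_1; \xi)$, which is well defined and $\F_{T_1}$-measurable; here the measurability and the rcll nature of the resulting trajectory are guaranteed by the continuity of $z \mapsto \bar G(\omega, z, x; \xi(\omega))$ established from hypothesis \ref{G1}. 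Proceeding inductively, on $[T_n, T_{n+1})$ I take $\{U_t\}$ to be the solution of the reduced equation restarted from the $\F_{T_n}$-measurable initial value $U_{T_n}$, and I splice in the jump $\bar G(U_{T_{n+1}-}, \Delta_{n+1}; \xi)$ at $T_{n+1}$. Concatenating these finitely-many-per-compact-interval pieces produces an $(\F_t)$-adapted rcll process, and since the large-jump integral reduces pathwise to the finite sum $\sum_{T_n \leq t} \bar G(U_{T_n-}, \Delta_n; \xi)$, one checks directly that $\{U_t\}$ solves \eqref{fd-sde-sln}.

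The one genuinely delicate point is the legitimacy of \emph{restarting} the reduced equation at the random time $T_n$. To make this rigorous I would pass to the shifted driving noise $B^{(n)}_\cdot := B_{T_n + \cdot} - B_{T_n}$ and the time-shifted small-jump Poisson random measure, which by the strong Markov property (independent, stationary increments) of the L\'evy noise are again a Brownian motion and a Poisson random measure of the same characteristics, independent of $\F_{T_n}$. Relative to the shifted filtration, $U_{T_n}$ and $\xi$ are admissible (i.e. $\F_0$-measurable, with no integrability imposed) initial data, so it is precisely the general version of the reduced-equation result — Step 3 of the proof of Theorem \ref{nrm-bd-rndm-inl}, which allows an arbitrary $\F_0$-measurable initial condition — that applies on each interval. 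Thus the main obstacle is bookkeeping: verifying that the strong Markov restart yields solutions compatible across the grid $\{T_n\}$ and that the patched process is adapted and rcll, rather than establishing any new estimate.

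Uniqueness follows from the same decomposition. If $\{X_t\}$ and $\{X'_t\}$ are two rcll adapted solutions of \eqref{fd-sde-sln}, then on $[0, T_1)$ both satisfy the reduced equation \eqref{reduced-fd-sde}, so by the pathwise uniqueness in Theorem \ref{nrm-bd-rndm-inl} they are indistinguishable there; in particular $X_{T_1-} = X'_{T_1-}$, whence the common jump $\bar G(X_{T_1-}, \Delta_1; \xi)$ forces $X_{T_1} = X'_{T_1}$. Iterating the argument across the successive intervals $[T_n, T_{n+1})$ and using $T_n \uparrow \infty$ shows $P(X_t = X'_t, \ \forall t \geq 0) = 1$, which completes the proof.
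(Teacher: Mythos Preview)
Your proposal is correct and follows essentially the same interlacing argument as the paper: both construct the solution piecewise between the successive large-jump times of the compound Poisson part, restart the reduced equation at each jump time via the strong Markov property of the L\'evy noise, splice in the $\bar G$-jump, and then obtain uniqueness by propagating the pathwise uniqueness of Theorem~\ref{nrm-bd-rndm-inl} across the intervals $[T_n,T_{n+1})$. The only cosmetic difference is that the paper writes the restarted solution on $(\eta_1,\eta_2)$ as $U_{\eta_1} + \widetilde U^{(2)}_t - \widetilde U^{(2)}_{\eta_1}$ rather than directly as a solution of the shifted reduced equation, but this is the same construction.
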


\begin{proof}
We follow the proof of \cite[Theorem 6.2.9]{MR2512800}. We have already proved the existence and uniqueness of the reduced equation in Theorem \ref{nrm-bd-rndm-inl}. Now, we use the interlacing technique to complete the proof.

Let $\{\eta_n\}_{n\in\N}$ denote the arrival times for the jumps of the compound Poisson process $\{P_t\}_{t\geq0}$, where each $P_t=\int_{(|x|\geq1)}xN(t,dx)$. By Theorem \ref{nrm-bd-rndm-inl} there exists a unique solution $\{\widetilde U^{(1)}_t\}$ to the reduced equation \eqref{reduced-fd-sde}. Define

\[U_t := \begin{cases}
         \widetilde U_t^{(1)};\ \ \ \text{for $0\leq t<\eta_1$}\\
         \widetilde U_{\eta_1-}^{(1)} + \bar G(\widetilde U_{\eta_1-}^{(1)},\triangle P_{\eta_1};\xi);\ \ \ \text{for $t=\eta_1$}\\
         U_{\eta_1} + \widetilde U^{(2)}_{t}-\widetilde U^{(2)}_{\eta_1};\ \ \ \text{for $\eta_1< t<\eta_2$}\\
         U_{\eta_2-}+\bar G( U_{\eta_2-},\triangle P_{\eta_2};\xi);\ \ \ \text{for $t=\eta_2$}\\
         \cdots
         \end{cases}
\]
Here $\{\widetilde U^{(2)}_t\}$ denotes the unique solution to \eqref{reduced-fd-sde} with initial condition $U_{\eta_1}$. Then $\{U_t\}$ is an adapted rcll process and solves \eqref{fd-sde-sln}.

We show that the uniqueness follows by the interlacing structure. Let $\{\hat{U}_t\}$ be another solution of \eqref{fd-sde-sln}. Then by the uniqueness of the reduced equation, a.s.
\[
\hat U_t=\widetilde U_t=U_t;\ \ \text{for $0\leq t<\eta_1$}.\]
Since, a.s. $\hat U_{\eta_1-}=\widetilde U_{\eta_1-} = U_{\eta_1-}$, we have a.s.
\[\hat U_{\eta_1} = \hat  U_{\eta_1-} + \bar G(\hat  U_{\eta_1-},\triangle P_{\eta_1};\xi) = \widetilde U_{\eta_1-} + \bar G(\widetilde U_{\eta_1-},\triangle P_{\eta_1};\xi) = U_{\eta_1}.
\]
Since $\{\hat U_t\}$ has no large jump in the time interval $(\eta_1, \eta_2)$ we have, a.s. for $t \in (\eta_1, \eta_2)$
\begin{equation}\label{interlace-sde-eta-12}
\begin{split}
\hat U_t &= \hat U_{\eta_1}+\int_{\eta_1}^t\bar b(\hat U_{s-};\xi)ds+\int_{\eta_1}^t \bar\sigma(\hat U_{s-};\xi)\cdot dB_s +\int_{\eta_1}^t\int_{(0 < |x| < 1)}  \bar F(\hat U_{s-},x;\xi)\, \widetilde
N(dsdx)\\
&=\hat U_{\eta_1}+\int_0^{t-\eta_1}\bar b(\hat U_{\eta_1+s-};\xi)ds+\int_0^{t-\eta_1} \bar\sigma(\hat U_{\eta_1+s-};\xi)\cdot dB_{\eta_1+s}\\ &\ \ +\int_0^{t-\eta_1}\int_{(0 < |x| < 1)}  \bar F(\hat U_{\eta_1+s-},x;\xi)\, \widetilde
N_s^{\eta_1}(dsdx). 
\end{split}
\end{equation}
We now describe $\{N_s^{\eta_1}\}$, which appeared in the last term of \eqref{interlace-sde-eta-12}. For any set $H \subset \R^d$, which is bounded away from $0$, i.e. $0 \notin \bar H$ and for any stopping time $\eta$, define \[N^{\eta}_{t}(H):=\left(N_{t+\eta}(H)-N_{\eta}(H)\right) \mathbbm{1}_{(\eta<\infty)}.\]
By strong Markov property \cite[Theorem 2.2.11]{MR2512800}, we have
$\Exp[e^{i\lambda N^{\eta}_{t}(H)}]=\Exp[e^{i\lambda N_{t}(H)}]$, $\{N^{\eta}_{t}\}$ is independent of $\F_{\eta}$, has rcll paths and is $(\F_{\eta+t})$ adapted. Furthermore,
$\Exp[N^{\eta}_{t}(H)]=t\nu(H)=\Exp[N_t(H)]$.

Note that the last equality of \eqref{interlace-sde-eta-12} is written in the reduced equation form. Since $\{U_t\}$ also solves the same reduced equation, by Theorem \ref{nrm-bd-rndm-inl} a.s. $\hat U_t=U_t$ for $\eta_1<t<\eta_2$. In particular, a.s. $\hat U_{\eta_2-} = U_{\eta_2-}$ and hence, a.s.
\[\hat U_{\eta_2} = \hat  U_{\eta_2-} + \bar G(\hat  U_{\eta_2-},\triangle P_{\eta_2};\xi) = U_{\eta_2-}+\bar G( U_{\eta_2-},\triangle P_{\eta_2};\xi) = U_{\eta_2}.\]
Continuing this way, we show that a.s. $U_t = \hat U_t, t \geq 0$. This completes the proof.
\end{proof}

\subsection{Local Lipschitz coefficients}\label{S:3-3}
In the previous subsection, we have established the existence and uniqueness results under `global Lipschitz' which we now extend for `local Lipschitz' coefficients.

Let $\widehat{\R^d}:=\R^d\cup\{\infty\}$ be the one point compactification of $\R^d$.  

\begin{theorem}
\label{nrm-sqre-rndm-inl-fnl}
Let \ref{sigma-b}, \ref{F1}, \ref{F2}, \ref{F3} and \ref{G1} hold. Suppose the following conditions are satisfied.
\begin{enumerate}[label=(\roman*)]
\item $\kappa, \xi$ are $\F_0$-measurable.
\item (Locally Lipschitz in $z$, locally in $y$) For every bounded set $\K$ in $\Sc_{-p}$ and positive integer $n$ there exists a constant $C(\K,n)>0$ s.t. for all $z_1, z_2\in \oo(0,n),\ y\in \K$ and $\omega\in\Omega$
\begin{equation}\label{Lipschitz-condition-rnm-inl-nrmsqr-fnl}
\begin{split}
&|\bar{b}(\omega,z_1;y) - \bar{b}(\omega,z_2;y)|^2+ |\bar{\sigma}(\omega,z_1;y)-
\bar{\sigma}(\omega,z_2;y)|^2\\
&+\int_{(0 < |x| < 1)}|\bar{F}(\omega,z_1,x;y) - \bar{F}(\omega,z_2,x;y)|^2 \, \nu(dx) \leq C(\K,n)\,
|z_1
- z_2|^2.
\end{split}
\end{equation}
\end{enumerate}
Then there exists an $(\F_t)$ stopping time $\eta$ and an $(\F_t)$ adapted $\widehat{\R^d}$ valued process $\{X_t\}$ with rcll paths such that $\{X_t\}$ solves \eqref{fd-sde-sln} upto time $\eta$ and $X_t=\infty$ for $t\geq\eta$. Further $\eta$ can be identified as follows: $\eta=\lim_m\theta_m$ where $\{\theta_m\}$ are $(\F_t)$ stopping times defined by $\theta_m:=\inf\{t\geq0:|X_t|\geq m\}$.\\
This is also pathwise unique in this sense: if $(\{X'_t\},\eta')$ is another such solution, then $P(X_t=X'_t, 0\leq t<\eta\wedge\eta')=1$.
\end{theorem}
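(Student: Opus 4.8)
The plan is to reduce to the globally Lipschitz setting of Theorem \ref{interlacing-global-sde} by a standard truncation (localization) and then patch the resulting solutions along a sequence of exit times. For each $n \in \N$ let $\pi_n : \R^d \to \overline{\oo(0,n)}$ be the radial retraction, $\pi_n(z) = z$ for $|z| \leq n$ and $\pi_n(z) = nz/|z|$ for $|z| > n$; this map is $1$-Lipschitz. Define truncated coefficients $\bar b^{(n)}(\omega, z; y) := \bar b(\omega, \pi_n(z); y)$ and, analogously, $\bar\sigma^{(n)}, \bar F^{(n)}, \bar G^{(n)}$. Since $\pi_n$ takes values in $\overline{\oo(0,n)} \subset \oo(0, n+1)$ and is $1$-Lipschitz, applying the local Lipschitz bound \eqref{Lipschitz-condition-rnm-inl-nrmsqr-fnl} at radius $n+1$ with constant $C(\K, n+1)$ yields the \emph{global} Lipschitz bound \eqref{Lipschitz-condition-rnm-inl} for $\bar b^{(n)}, \bar\sigma^{(n)}, \bar F^{(n)}$. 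Because $\pi_n(0) = 0$, the truncated coefficients agree with the originals at $z = 0$, so the linear growth bound \eqref{picard3} (which in the proof of Theorem \ref{nrm-bd-rndm-inl} follows from \ref{sigma-b}, \ref{F1}, \ref{F3} together with the value at $z = 0$) continues to hold, and $\bar G^{(n)}$ inherits the continuity \ref{G1}. Hence, for each fixed $n$, Theorem \ref{interlacing-global-sde} applies to \eqref{fd-sde-sln} with coefficients $\bar b^{(n)}, \bar\sigma^{(n)}, \bar F^{(n)}, \bar G^{(n)}$ and produces a unique $(\F_t)$-adapted rcll solution $\{X^{(n)}_t\}$.

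Next I would establish the consistency of this family. For $m \leq n$ set $\theta^{(n)}_m := \inf\{t \geq 0 : |X^{(n)}_t| \geq m\}$. On $\oo(0,m)$ one has $\pi_m = \pi_n = \mathrm{id}$, so the level-$m$ and level-$n$ coefficients agree there, including $\bar G$, since a large jump from a point of norm $< m$ is computed with the true $\bar G$ at both levels. Because $\{X^{(m)}_t\}$ and $\{X^{(n)}_t\}$ both remain in $\oo(0,m)$ up to their first exit and solve the same level-$m$ equation there, the pathwise uniqueness in Theorem \ref{interlacing-global-sde} forces them to be indistinguishable on $[0, \theta_m)$, where the common exit time $\theta_m := \theta^{(n)}_m$ is independent of $n \geq m$. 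In particular the $\theta_m$ are a.s. nondecreasing in $m$; set $\eta := \lim_m \theta_m$. I then define $X_t := X^{(n)}_t$ for $t < \theta_n$ (well defined by consistency) and $X_t := \infty$ for $t \geq \eta$.

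It remains to verify the asserted properties and uniqueness. The process $\{X_t\}$ is adapted with rcll paths on $[0, \eta)$ since it agrees with the rcll process $X^{(n)}$ on each $[0, \theta_n)$; it solves \eqref{fd-sde-sln} up to $\eta$ because on $[0, \theta_n)$ the path lies in $\oo(0,n)$, where $X^{(n)}$ solves the SDE with the true coefficients. By construction $\theta_m = \inf\{t : |X_t| \geq m\}$ and $\eta = \lim_m \theta_m$, giving the stated identification. For uniqueness, given another solution $(\{X'_t\}, \eta')$, define $\theta'_m$ from $X'$; on $[0, \theta_m \wedge \theta'_m)$ both $X$ and $X'$ stay in $\oo(0,m)$ and solve the level-$m$ equation, so by uniqueness in Theorem \ref{interlacing-global-sde} each coincides there with $X^{(m)}$, whence $X = X'$ on that interval. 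Letting $m \to \infty$ yields $P(X_t = X'_t,\ 0 \leq t < \eta \wedge \eta') = 1$.

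The main obstacle I anticipate is the bookkeeping around the truncation rather than any single hard estimate: one must check that composing the parametrized coefficients with $\pi_n$ genuinely preserves \ref{sigma-b}, \ref{F1}--\ref{F3} and the global Lipschitz bound (with the adjusted constant $C(\K, n+1)$), and then justify the localization on the random intervals $[0, \theta_m)$ — in particular that the large-jump term handled by interlacing is consistent across truncation levels, since an exit from $\oo(0,m)$ may itself occur at a jump time. Once pathwise uniqueness for the globally Lipschitz equation is available, the patching is routine.
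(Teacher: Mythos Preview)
Your approach is correct and follows the same overall architecture as the paper's proof: truncate the coefficients to recover a global Lipschitz condition, invoke the global existence/uniqueness result at each level, and patch along the exit times $\theta_m$. The differences are in implementation rather than strategy. You truncate by composing with the $1$-Lipschitz radial retraction $\pi_n$ onto $\overline{\oo(0,n)}$, which immediately transports the local Lipschitz bound \eqref{Lipschitz-condition-rnm-inl-nrmsqr-fnl} (at level $n+1$) to a global one; the paper instead uses the ``tent'' cutoff $h^R(z)=\tfrac{2R-|z|}{R}\,h(Rz/|z|)$ from \cite[Chapter 5, Exercise 3.1]{MR1398879} and must verify the Lipschitz property for $\bar F^R$ by an explicit computation using Lemma \ref{f-bd} and the inequalities $\bigl|z_1-\tfrac{Rz_2}{|z_2|}\bigr|\le|z_1-z_2|$, $\bigl||z_2|-R\bigr|\le|z_1-z_2|$. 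Your retraction is shorter. For consistency between levels, the paper leaves $\bar G$ untruncated and argues inductively over the large-jump arrival times $\pi_i$, applying the reduced-equation uniqueness (Step 1 of Theorem \ref{nrm-bd-rndm-inl}) on each inter-jump interval; you truncate $\bar G$ as well and appeal directly to the pathwise uniqueness of Theorem \ref{interlacing-global-sde} on the stopped interval $[0,\theta_m)$. Both routes are valid; the paper's jump-by-jump decomposition makes the localization at the random time $\theta_m$ explicit, while in your version that step (which you flag in your last paragraph) still needs to be written out, e.g.\ by running the Gronwall/uniqueness estimate on the stopped processes $X^{(m)}_{\cdot\wedge\tau}$, $X^{(n)}_{\cdot\wedge\tau}$ with $\tau=\theta^{(m)}_m\wedge\theta^{(n)}_m$.
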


\begin{proof}
To prove the existence result, we first obtain a version of the `global Lipschitz' condition \eqref{Lipschitz-condition-rnm-inl} for $\bar b(\omega,z;y)$, $\bar\sigma(\omega,z;y)$, $\bar F(\omega,z,x;y)$ from our assumption on `local Lipschitz' condition  \eqref{Lipschitz-condition-rnm-inl-nrmsqr-fnl}.\\
Let $n,m \in \N$ and let $R$ be a positive real number. Let $h:\R^n \to \R^m$ satisfy $|h(x)-h(y)|\leq C|x-y|$ for all $x,y$ with $|x|, |y|\leq R$, where $C$ is a positive constant. Define
\[h^R(x) := \begin{cases}
            h(x), \ \text{if}\, |x| \leq R\\
            \frac{2R-|x|}{R}\cdot h(Rx/|x|),\ \text{if}\ R\leq|x|\leq 2R\\
            0, \ \text{if}\ |x| \geq	 2R.
            \end{cases}
\]
By \cite[Chapter 5, Exercise 3.1]{MR1398879}, $h^R$ is Lipschitz continuous on $\R^n$. For every fixed $y$ and $\omega$, we construct $\bar\sigma^R(\omega,\cdot;y)$ for $\bar\sigma(\omega,\cdot;y)$ in the same way viz.,
\[\bar\sigma^R(\omega,z;y):= \begin{cases}
\bar\sigma(\omega,z;y),\ \text{for}\ |z|\leq R;\\
\frac{2R-|z|}{R}\cdot\bar\sigma\left(\omega,\frac{Rz}{|z|};y\right),\ \text{for}\ R\leq|z|\leq2R,\\
0,\ \text{for}\ |z|\geq2R.
\end{cases}
\]
Similarly define $\bar b^R(\omega,\cdot;y)$ and $\bar F^R(\omega,\cdot,x;y)$ for every fixed $x,y$ and $\omega$. Then using \eqref{Lipschitz-condition-rnm-inl-nrmsqr-fnl} and applying the above exercise, we conclude that $\bar b^R(\omega,z;y)$ and $\bar\sigma^R(\omega,z;y)$ are globally Lipschitz in $z$ as in \eqref{Lipschitz-condition-rnm-inl}. We now show \eqref{Lipschitz-condition-rnm-inl} holds for $\bar F^R(\omega,z,x;y)$.\\
By \eqref{Lipschitz-condition-rnm-inl-nrmsqr-fnl} and Lemma \ref{f-bd}, for any $z \in \R^d$ with $|z| \leq R$ and any bounded set $\K$ in $\Sc_{-p}$, we have 
\begin{equation}\label{observe}
\begin{split}
&\int_{(0<|x|<1)}\left|\bar F\left(\omega,z,x;y\right)\right|^2\nu(dx)\\
&\leq 2 \int_{(0<|x|<1)}\left|\bar F\left(\omega,z,x;y\right)-\bar F\left(\omega,0,x;y\right)\right|^2\nu(dx) + 2\int_{(0<|x|<1)}\left|\bar F\left(\omega,0,x;y\right)\right|^2\nu(dx)\\
&\leq 2 C(\K,R)R^2 +2 \alpha(\K), \ \forall y \in \K.
\end{split}
\end{equation}

Fix $z_1, z_2 \in \R^d$, with
$|z_1|\leq R$ and $R\leq|z_2|\leq 2R$. Then
\begin{align*}
&\int_{(0<|x|<1)}|\bar F^R(\omega,z_1,x;y)-\bar F^R(\omega,z_2,x;y)|^2\nu(dx)\\
&=\int_{(0<|x|<1)}\left|\bar F(\omega,z_1,x;y)-\frac{2R-|z_2|}{R}\cdot\bar F\left(\omega,\frac{Rz_2}{|z_2|},x;y\right)\right|^2\nu(dx)\\
&\leq2\int_{(0<|x|<1)}\left|\bar F(\omega,z_1,x;y)-\bar F\left(\omega,\frac{Rz_2}{|z_2|},x;y\right)\right|^2\nu(dx)\\
&\ \ \ \ \ \ +2\frac{||z_2|-R|^2}{R^2}\int_{(0<|x|<1)}\left|\bar F\left(\omega,\frac{Rz_2}{|z_2|},x;y\right)\right|^2\nu(dx)\\
&\leq2C(\K,R)\left|z_1-\frac{Rz_2}{|z_2|}\right|^2+2\frac{||z_2|-R|^2}{R^2} \left[2 C(\K,R)R^2 +2 \alpha(\K) \right]\\ 
&=|z_1-z_2|^2\left[6C(\K,R)+\frac{4}{R^2}\alpha(\K)\right].
\end{align*}
In the above calculation, we have used \eqref{observe} and two inequalities, viz. $\left|z_1-\frac{Rz_2}{|z_2|}\right|^2\leq|z_1-z_2|^2$ and $||z_2|-R|^2\leq|z_1-z_2|^2$. These inequalities are easy to verify. For example,  the first one follows from the equivalent statement $|z_1|^2+R^2-2\frac{R}{|z_2|} (z_1)^t z_2 \leq|z_1|^2+|z_2|^2 - 2(z_1)^t z_2$.

Similar arguments show that \eqref{Lipschitz-condition-rnm-inl} holds for $\bar F^R$ for all $z_1, z_2 \in \R^d$. This shows that the `global Lipschitz' regularity \eqref{Lipschitz-condition-rnm-inl} holds for $\bar b^R$, $\bar\sigma^R$ and $\bar F^R$. Since $\bar b^R(\omega,0;y) = \bar b(\omega,0;y), \bar \sigma^R(\omega,0;y) = \bar \sigma(\omega,0;y)$ and $\bar F^R(\omega,0,x;y) = \bar F(\omega,0,x;y), \forall |x| < 1, y \in \Sc_{-p}$, the growth condition \eqref{picard3} can be established for $\bar b^R$, $\bar\sigma^R$ and $\bar F^R$ as done in Step 1 of Theorem \ref{nrm-bd-rndm-inl}. Then arguing as in Theorem \ref{nrm-bd-rndm-inl} (Steps 1, 2 and 3) and Theorem \ref{interlacing-global-sde}, for $R \in \N$, we have the existence of a unique process $\{X^R_t\}$ satisfying a.s. for every $t\geq0$
\begin{equation}
\begin{split}
X^R_t &=\kappa+\int_0^t\bar b^R(X_{s-}^R;\xi)ds+ \int_0^t\bar\sigma^R(X_{s-}^R;\xi)\cdot dB_s
+\int_0^t\int_{(0 < |x| < 1)}  \bar F^R(X_{s-}^R,x;\xi)\, \widetilde
N(dsdx)\\
& +\int_0^t \int_{(|x| \geq  1)} \bar G(X_{s-}^R,x;\xi) \,
N(dsdx).
\end{split}
\end{equation}
Let $\pi_i, i=1,2,\cdots$ denote the arrival times for the jumps of the compound Poisson process $\{P_t\}_{t\geq0}$, where each $P_t=\int_{(|x|\geq1)}xN(t,dx)$. Let $m,n\in\N$ and $m<n$. Consider the stopping times
\[\theta_{m,i}^n:= \inf\{t\geq0:\ |X^m_t|\ \text{Or}\ |X^n_t|\geq m\}\wedge \pi_i.\]
Take $i = 1$. Then $\{X^m_t\}$ and $\{X^n_t\}$ both satisfy the same reduced equation
\begin{equation}
\begin{split}
dX_{t} &=\bar b^m(X_{t-};\xi)dt+ \bar\sigma^m(X_{t-};\xi)\cdot dB_t + \int_{(0 < |x| < 1)}  \bar F^m(X_{t-},x;\xi)\, \widetilde
N(dtdx), \quad t < \theta_{m,1}^n,\\
X_0&=\kappa;
\end{split}
\end{equation}
First assume $\xi$ is norm bounded and consider the stopped processes $\{X^m_{t\wedge \theta_{m,1}^n}\}$ and $\{X^n_{t\wedge \theta_{m,1}^n}\}$. Then arguing as in the uniqueness proof of Step 1 in Theorem \ref{nrm-bd-rndm-inl}, we conclude a.s. $X^m_t = X^n_t, t < \theta_{m,1}^n$. In particular, a.s. $X^m_{t-} = X^n_{t-}$ for $t = \theta_{m,1}^n$. Further, for almost all $\omega$ such that $\pi_1(\omega) = \theta_{m,1}^n(\omega)$, we have
\[X^m_t(\omega) = X^m_{t-}(\omega) + \bar G(X^m_{t-}(\omega),\triangle N_t,;\xi) = X^n_t(\omega),\quad t = \pi_1(\omega).\]
We extend this result for $\F_0$ measurable $\xi$ by arguing as in Step 2 in Theorem \ref{nrm-bd-rndm-inl}.

Take $i=2$. Note that the contribution of the term involving $\bar G$ in $X^m_{t\wedge \theta_{m,2}^n}$ and $X^n_{t\wedge \theta_{m,2}^n}$ for the large jump at $t = \pi_1$ are the same. Arguing as in the case $i = 1$, we conclude a.s $X^m_t = X^n_t, t < \theta_{m,2}^n$. 

Repeating the arguments, we have a.s. for all $i, m, n$ with $m < n, X^m_t = X^n_t, t < \theta_{m,i}^n$. Since a.s. $\pi_i \uparrow \infty$ as $i \to \infty$, a.s. for all $m, n$ with $m < n$ we have $X^m_t = X^n_t, t < \theta_m^n$, where
\[\theta_m^n := \inf\{t\geq0:\ |X^m_t|\ \text{Or}\ |X^n_t|\geq m\}.\]
In particular, 
$\theta_m^n=\inf\{t\geq0:\ |X^m_t|\geq m\}=\inf\{t\geq0:\ |X^n_t|\geq m\}$. As such, $\theta_m^n$ is independent of $n(>m)$. Define $\theta_m:=\inf\{t\geq0:\ |X^m_t|\geq m\}$ and set
\[X_t:=
\begin{cases}
X_t^m\ \ \text{for}\ t\leq\theta_m,\\
\infty,\ \text{for}\ t \geq \eta,
\end{cases}\]
so that $(\{X_t\},\eta)$ is a solution of \eqref{fd-sde-sln} for $t < \eta:= \lim_{m\uparrow\infty}\theta_m$.

To prove the uniqueness, we consider the solution $(\{X_t\}, \eta)$ constructed above and compare it with any arbitrary solution $(\{X'_t\}, 
\eta')$ of \eqref{fd-sde-sln}. In the proof of existence of solutions, we had compared $\{X^m_t\}$ and $\{X^n_t\}$. We follow the same approach and define 
\[\theta^R:=\inf\{t\geq0:|X_t|\ \text{Or}\ |X'_t|\geq R\}\wedge\eta\wedge\eta', \forall R \in \N.\]
We then conclude a.s. $X_t = X'_t, t < \theta^R, \forall R\in \N$. Letting $R$ go to infinity concludes the proof.
\end{proof}
                                 
\begin{remark}\label{explicit-conditions}
The `local Lipschitz' condition \eqref{Lipschitz-condition-rnm-inl-nrmsqr-fnl} follows from regularity assumptions on $\sigma, b$ and $F$, provided other hypotheses are satisfied (see \cite[Proposition 3.7]{Levy-SPDE}). As mentioned in Section \ref{S:1}, the class of SDEs \eqref{fd-sde-sln} considered above are related to a class of stochastic PDEs taking values in $\Sc^\prime$. The existence and uniqueness problems for these stochastic PDEs are studied in \cite{Levy-SPDE}.
\end{remark}

\textbf{Acknowledgement:} The first author would like to acknowledge the fact that he was supported by the NBHM (National Board for Higher Mathematics, under Department of Atomic Energy, Government of India) Post Doctoral Fellowship. The second author would like to acknowledge the fact that he was partially supported by the ISF-UGC research grant. The authors would like to thank Prof. B. Rajeev, Indian Statistical Institute Bangalore Centre, India for valuable suggestions during the work.

\bibliographystyle{plain}

\begin{thebibliography}{10}

\bibitem{MR2512800}
David Applebaum.
\newblock {\em L\'evy processes and stochastic calculus}, volume 116 of {\em
  Cambridge Studies in Advanced Mathematics}.
\newblock Cambridge University Press, Cambridge, second edition, 2009.

\bibitem{JOTP-erratum}
Suprio Bhar.
\newblock Correction to: An {I}t\=o {F}ormula in the {S}pace of {T}empered
  {D}istributions.
\newblock {\em J. Theoret. Probab.}, 30(4):1786--1787, 2017.

\bibitem{MR3647067}
Suprio Bhar.
\newblock An {I}t\=o {F}ormula in the {S}pace of {T}empered {D}istributions.
\newblock {\em J. Theoret. Probab.}, 30(2):510--528, 2017.

\bibitem{MR3687773}
Suprio Bhar.
\newblock Stationary solutions of stochastic partial differential equations in
  the space of tempered distributions.
\newblock {\em Commun. Stoch. Anal.}, 11(2):169--193, 2017.

\bibitem{Levy-SPDE}
Suprio Bhar, Rajeev Bhaskaran, and Barun Sarkar.
\newblock {Stochastic PDEs in $\mathcal{S}^\prime$ for SDEs driven by L\'evy
  noise}.
\newblock {\em (Preprint)}.

\bibitem{MR1398879}
Richard Durrett.
\newblock {\em Stochastic calculus}.
\newblock Probability and Stochastics Series. CRC Press, Boca Raton, FL, 1996.
\newblock A practical introduction.

\bibitem{MR2560625}
Leszek Gawarecki and Vidyadhar Mandrekar.
\newblock {\em Stochastic differential equations in infinite dimensions with
  applications to stochastic partial differential equations}.
\newblock Probability and its Applications (New York). Springer, Heidelberg,
  2011.

\bibitem{MR1011252}
Nobuyuki Ikeda and Shinzo Watanabe.
\newblock {\em {Stochastic differential equations and diffusion processes}},
  volume~24 of {\em {North-Holland Mathematical Library}}.
\newblock North-Holland Publishing Co., Amsterdam, second edition, 1989.

\bibitem{MR771478}
Kiyosi It{\=o}.
\newblock {\em Foundations of stochastic differential equations in
  infinite-dimensional spaces}, volume~47 of {\em CBMS-NSF Regional Conference
  Series in Applied Mathematics}.
\newblock Society for Industrial and Applied Mathematics (SIAM), Philadelphia,
  PA, 1984.

\bibitem{MR1121940}
Ioannis Karatzas and Steven~E. Shreve.
\newblock {\em {Brownian motion and stochastic calculus}}, volume 113 of {\em
  {Graduate Texts in Mathematics}}.
\newblock Springer-Verlag, New York, second edition, 1991.

\bibitem{MR2001996}
Bernt {\O}ksendal.
\newblock {\em Stochastic differential equations}.
\newblock Universitext. Springer-Verlag, Berlin, sixth edition, 2003.
\newblock An introduction with applications.

\bibitem{MR2020294}
Philip~E. Protter.
\newblock {\em Stochastic integration and differential equations}, volume~21 of
  {\em Applications of Mathematics (New York)}.
\newblock Springer-Verlag, Berlin, second edition, 2004.
\newblock Stochastic Modelling and Applied Probability.

\bibitem{MR3063763}
B.~Rajeev.
\newblock Translation invariant diffusion in the space of tempered
  distributions.
\newblock {\em Indian J. Pure Appl. Math.}, 44(2):231--258, 2013.

\bibitem{MR1999259}
B.~Rajeev and S.~Thangavelu.
\newblock {Probabilistic representations of solutions to the heat equation}.
\newblock {\em Proc. Indian Acad. Sci. Math. Sci.}, 113(3):321--332, 2003.

\bibitem{MR2373102}
B.~Rajeev and S.~Thangavelu.
\newblock {Probabilistic representations of solutions of the forward
  equations}.
\newblock {\em Potential Anal.}, 28(2):139--162, 2008.

\end{thebibliography}

\end{document}